\documentclass[11pt, reqno]{amsart}
\usepackage{amsmath,amssymb,amsthm,amscd,latexsym}
\usepackage[dvipdfmx]{graphicx}
\usepackage[all,cmtip]{xy}
\usepackage{wrapfig}
\usepackage{color}
\usepackage{enumerate}
\usepackage{cases}
\usepackage[top=30mm,bottom=30mm,left=30mm,right=30mm]{geometry}
\usepackage{graphics}
\usepackage{tikz}
\usepackage{pxpgfmark}
\usepackage{multirow}
\usepackage{extarrows}
\usepackage{comment}
\usepackage[colorlinks=true, linkcolor=blue, citecolor=red]{hyperref}
\usepackage{bookmark}

\usetikzlibrary{patterns}
\usetikzlibrary{intersections, calc,decorations.markings}
    
\numberwithin{equation}{section}
\theoremstyle{definition}
\newtheorem{example}{Example}[section]
\newtheorem{definition}[example]{Definition}

\newtheorem{remark}[example]{Remark}

\theoremstyle{plain}

\newtheorem{theorem}[example]{Theorem}
\newtheorem{proposition}[example]{Proposition}
\newtheorem{corollary}[example]{Corollary}

\newlength\squareheight
\setlength\squareheight{6.75pt}

\newcommand{\relmiddle}[1]{\mathrel{}\middle#1\mathrel{}}

\DeclareMathOperator{\diag}{{\rm diag}}

\title[Finite type and completeness of $g$-fans]
{Finite type and completeness of $g$-fans}
\author{Toshiya Yurikusa}
\address{Department of Mathematics, Osaka Metropolitan University, Osaka 558-8585, Japan}
\email{yurikusa@omu.ac.jp}
\subjclass[2020]{13F60, 05E45}
\keywords{Cluster algebra, $g$-vector}

\begin{document}

\begin{abstract}
We study the $g$-fan associated with a skew-symmetrizable matrix in the sense of cluster algebras. We show that a skew-symmetrizable matrix is of finite type if and only if its $g$-fan is complete; equivalently (as we show), its support contains all lattice points.
\end{abstract}

\maketitle
\tableofcontents

%%%%%%%%%%
%%%%%%%%%%
%%%%%%%%%%
%%%%%%%%%%
%%%%%%%%%%
%%%%%%%%%%
%%%%%%%%%%
%%%%%%%%%%
%%%%%%%%%%
%%%%%%%%%%
\section{Introduction}

Cluster algebras \cite{FZ02} have been studied from many points of view. One of the basic objects in this theory is the notion of $g$-vectors \cite{FZ07}.

Let $B$ be a skew-symmetrizable matrix. The $g$-vectors associated with $B$ span simplicial cones in $\mathbb{R}^n$. These cones form a simplicial fan, which is called the $g$-fan of $B$ (see \cite{GHKK18,Re14}). The geometry of this fan encodes the structure of the associated cluster algebra.

A skew-symmetrizable matrix $B$ is said to be of \emph{finite type} if it has only finitely many $g$-vectors, or equivalently, finitely many cluster variables. Finite type matrices are well understood and play an important role in cluster theory.

It is then natural to ask how the finite type property of $B$ is reflected in the geometry of its $g$-fan. In particular, one may ask whether the $g$-fan covers the whole space $\mathbb{R}^n$, or whether it can be characterized by the property of containing all lattice points in $\mathbb{Z}^n$. This question is also motivated by \cite[Question~3.49]{D17} in representation theory. In this paper, we give a complete answer to this question in the framework of $g$-fans arising from skew-symmetrizable matrices. More precisely, we prove the following theorem.

\begin{theorem}\label{thm:main}
For a skew-symmetrizable matrix $B$, the following are equivalent:
\begin{itemize}
\item[$(1)$] $B$ is of finite type.
\item[$(2)$] The $g$-fan $\mathcal{F}(B)$ is complete, that is, its support $|\mathcal{F}(B)|=\mathbb{R}^n$.
\item[$(3)$] The support $|\mathcal{F}(B)|$ contains all lattice points in $\mathbb{Z}^n$.
\end{itemize}
\end{theorem}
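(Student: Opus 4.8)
The plan is to prove the cycle of implications $(1)\Rightarrow(2)\Rightarrow(3)\Rightarrow(1)$, since $(2)\Rightarrow(3)$ is immediate once $(2)$ is established (the full space trivially contains every lattice point). The implication $(1)\Rightarrow(2)$ should follow from the classical structure theory of finite type cluster algebras: when $B$ is of finite type there are only finitely many $g$-vectors and hence finitely many maximal cones, and one knows (via the connection to the associated root system and the fact that the $g$-vectors realize the almost positive roots) that the resulting simplicial fan is the normal fan of a generalized associahedron, which is a complete polytopal fan. Thus $|\mathcal{F}(B)|=\mathbb{R}^n$. Alternatively, one can argue directly that the $g$-fan of a finite type matrix is closed under mutation in all directions and, being finite, must exhaust $\mathbb{R}^n$ by a connectedness-and-boundary argument on the walls between adjacent chambers.

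The substantive content is the contrapositive of $(3)\Rightarrow(1)$: assuming $B$ is of \emph{infinite} type, I must exhibit a lattice point in $\mathbb{Z}^n$ that lies outside $|\mathcal{F}(B)|$. The strategy I would take is to reduce to a rank-two obstruction. If $B$ is of infinite type, then by the classification of finite type matrices (the $g$-fan is complete precisely in the Dynkin cases), $B$ must contain, after mutation and restriction to a suitable rank-two subconfiguration, a rank-two submatrix of infinite type, i.e. one whose off-diagonal product $|b_{12}b_{21}|\ge 4$. In rank two of infinite type the $g$-fan is explicitly computable: the rays accumulate toward two limiting rays bounding a nontrivial cone (a ``gap'') that contains no $g$-vector and no chamber. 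One then shows that this gap, being an open cone of full dimension bounded away from all the $g$-cones, contains a lattice point. The main work is to propagate this rank-two gap up to rank $n$: I would use the fact that the $g$-fan of $B$ restricts compatibly along a sequence of mutations to the $g$-fan of the rank-two submatrix, so that the rank-two gap lifts to a region of $\mathbb{R}^n$ disjoint from $|\mathcal{F}(B)|$, and then choose a lattice point inside it.

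The \textbf{main obstacle} I anticipate is precisely this lifting step: showing that an infinite type matrix of arbitrary rank necessarily produces a gap in its $g$-fan that survives as a full-dimensional region avoiding the support, and that this region is guaranteed to contain a lattice point. The rank-two case gives the mechanism, but controlling the geometry of the $g$-fan of a higher-rank infinite type matrix — in particular ruling out the possibility that cones from other directions ``fill in'' the rank-two gap — requires care. I expect this to hinge on a separating-hyperplane or sign-coherence argument for $g$-vectors, using the structural properties of $g$-vectors (such as the sign-coherence theorem and the existence of a well-behaved mutation rule) established earlier, to certify that the candidate lattice point cannot lie in the cone spanned by any cluster's $g$-vectors.

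The secondary difficulty is arranging the lattice point to have \emph{integer} coordinates: a gap is an open cone, so it contains rational directions, but one must scale to land on an honest lattice point while remaining inside the open region. This is a routine density argument once the full-dimensional gap is in hand, but it is the reason statement $(3)$ is genuinely equivalent to $(2)$ rather than strictly weaker.
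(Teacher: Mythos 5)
Your overall architecture matches the paper exactly: $(1)\Rightarrow(2)$ by the known completeness in finite type, $(2)\Rightarrow(3)$ trivially, and $(3)\Rightarrow(1)$ by reducing, via $2$-finiteness, to a mutation-equivalent matrix with a rank-two principal submatrix $B_{b,c}$, $bc\ge 4$, whose $g$-fan has a ``gap'' between the limiting rays $r_\pm$. However, the step you yourself flag as the main obstacle --- ruling out that higher-rank cones fill in the lifted gap --- is exactly the mathematical content of the paper's argument, and you do not supply it. The paper resolves it with the pull-back construction for scattering diagrams (\cite[Theorem~33]{Mu16}, extended to the skew-symmetrizable case in \cite{CL20}): every chamber of $\mathfrak{D}(B)$ is contained in a chamber of $\pi_I^{\ast}\mathfrak{D}(B_I)$, so every $g$-cone of $B$ projects under $\pi_I$ into the closure of a single chamber of $\mathfrak{D}(B_I)$, and since $\pi_I(C_0^B)=C_0^{B_I}$ and the chambers reachable from $C_0^{B_I}$ never cross $r_\pm$, the support $|\mathcal{F}(B)|$ misses $\pi_I^{-1}$ of the gap. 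Your proposed substitute --- ``a separating-hyperplane or sign-coherence argument'' --- is not an argument; sign-coherence alone does not control where the $g$-cones of $B$ sit relative to the rank-two projection. So the key lemma is missing rather than merely sketched.

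There is also a concrete error in how you extract the lattice point. You argue that the gap is ``an open cone of full dimension'' and then invoke density. But when $bc=4$ (the affine rank-two case, e.g.\ $B_{2,2}$, which genuinely occurs and cannot be mutated away), $r_+=r_-$ and the complement of $|\mathcal{F}(B_{b,c})|$ in $\mathbb{R}^2$ is a single ray, not a full-dimensional cone, so your density argument produces nothing. The paper instead observes that the ray of slope $-b/2$ lies in the gap for all $bc\ge4$ (it is the midpoint of the slopes of $r_\pm$, and coincides with them when $bc=4$), and that this ray is rational, containing the explicit lattice point $(-2,b)$; after taking $\pi_I^{-1}$, this gives a lattice point of $\mathbb{Z}^n$ outside $|\mathcal{F}(B)|$. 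Finally, you reduce to a mutation-equivalent matrix $B'$ but never say why a missing lattice point for $B'$ yields one for $B$; this needs the piecewise-linear transition rule for $g$-fans under mutation (which preserves $\mathbb{Z}^n$), i.e.\ the paper's Corollary~\ref{cor:preserve lattice pt}. Each of these three points is fixable, but as written the proposal identifies the right route without proving its two essential steps.
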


The implication $(1)\Rightarrow(2)$ is known (Theorem~\ref{thm:Reading}). For skew-symmetric matrices, the implication $(2)\Rightarrow(1)$ was proved in \cite{HY25} via categorification using Jacobian algebras. In \cite{Y23}, we suggested that this implication might be proved by methods similar to those in \cite[Proposition~4.9]{A21} in general, but no complete proof was available. In this paper, we give a complete proof using scattering diagrams and their pull-back constructions.

%%%%%%%%%%
%%%%%%%%%%
%%%%%%%%%%
%%%%%%%%%%
%%%%%%%%%%
%%%%%%%%%%
%%%%%%%%%%
%%%%%%%%%%
%%%%%%%%%%
%%%%%%%%%%
\section{Mutations of matrices and $g$-vector tuples}\label{sec:}

In this section, we briefly recall mutations of matrices and $g$-vectors \cite{FZ07}. Let $n$ be a positive integer. An $n\times n$ integer matrix $B=(b_{ij})$ is said to be \emph{skew-symmetrizable} if there is a diagonal matrix $D=\diag(d_1,\ldots,d_n)$ with positive integers $d_i$ such that $DB$ is skew-symmetric, that is, $d_i b_{ij}=-d_j b_{ji}$. We recall the mutation of matrices.

\begin{definition}\label{def:matrix mutation}
Let $m\ge n$ be positive integers and $B=(b_{ij})$ an $m\times n$ integer matrix whose upper part $(b_{ij})_{1\le i,j\le n}$ is skew-symmetrizable. The \emph{mutation} of $B$ at $k$ $(1\le k\le n)$ is the matrix $\mu_k(B)=(b'_{ij})$ defined by
\[
b'_{ij}=\begin{cases}
-b_{ij}&\text{if $i=k$ or $j=k$},\\
 b_{ij}+b_{ik}[b_{kj}]_++[-b_{ik}]_+\, b_{kj}&\text{otherwise},
\end{cases}
\]
where $[a]_+:=\max(a,0)$.
\end{definition}

It is straightforward to check that the upper part of $\mu_k(B)$ is again skew-symmetrizable, and that $\mu_k$ is an involution.

An $n\times n$ skew-symmetrizable matrix $B$ is called
\begin{itemize}
\item \emph{mutation equivalent} to a matrix $B'$ if it is obtained from $B'$ by a finite sequence of mutations;
\item \emph{$2$-finite} if for any matrix $B'=(b'_{ij})$ mutation equivalent to $B$, $|b'_{ij}b'_{ji}|\le 3$ for all $i,j$.
\end{itemize}

Although the notion of a cluster algebra provides the natural background for our study, the results of this paper depend only on the mutation rule for $g$-vectors associated with a skew-symmetrizable matrix. For this reason, we do not recall the general definition of cluster algebras.

\begin{definition}
A \emph{$g$-vector seed} is a pair $(C,(\mathbf{g}_1,\ldots,\mathbf{g}_n))$ consisting of the following data:
\begin{itemize}
\item[$(1)$] $C$ is a $2n\times n$ integer matrix whose upper part is skew-symmetrizable.
\item[$(2)$] $\mathbf{g}_1,\ldots,\mathbf{g}_n\in\mathbb Z^n$.
\end{itemize}
The tuple $(\mathbf{g}_1,\ldots,\mathbf{g}_n)$ is called the \emph{$g$-vector tuple}.
\end{definition}

Fix an $n\times n$ skew-symmetrizable matrix $B=(b_{ij})$. Let $(C=(c_{ij}),(\mathbf{g}_1,\ldots,\mathbf{g}_n))$ be a $g$-vector seed. The \emph{mutation} of this seed at $k\in\{1,\ldots,n\}$ is defined by
\[
\mu_k(C,(\mathbf{g}_1,\ldots,\mathbf{g}_n)):=\bigl(\mu_k(C),(\mathbf{g}'_1,\ldots,\mathbf{g}'_n)\bigr),
\]
where
\[
\mathbf{g}'_\ell=\begin{cases}
\mathbf{g}_\ell&\text{if $\ell\neq k$},\\
-\mathbf{g}_k+\displaystyle\sum_{i=1}^n[c_{ik}]_+\,\mathbf{g}_i-\displaystyle\sum_{j=1}^n[c_{n+j,k}]_+\,\mathbf{b}_j&\text{if $\ell=k$},
\end{cases}
\]
and $\mathbf{b}_j$ denotes the $j$th column of $B$.

We denote by $\hat B$ the $2n\times n$ matrix whose upper part is $B$ and whose lower part is the $n\times n$ identity matrix. A $g$-vector tuple is called a \emph{$g$-vector tuple for $B$} if it is obtained from the initial seed $(\hat{B},(\mathbf{e}_1,\ldots,\mathbf{e}_n))$ by a finite sequence of mutations, where $\mathbf{e}_i$ is the $i$th standard basis vector of $\mathbb{Z}^n$. This formulation is equivalent to the usual definition of $g$-vectors in cluster algebras \cite{FZ07}.

\begin{definition}
A skew-symmetrizable matrix $B$ is said to be \emph{of finite type} if there are only finitely many $g$-vector tuples for $B$.
\end{definition}

\begin{remark}
A skew-symmetrizable matrix $B$ is of finite type in the above sense if and only if the associated cluster algebra $\mathcal{A}(B)$ has only finitely many cluster variables.
\end{remark}

\begin{theorem}[{\cite[Theorems~1.5 and~7.1]{FZ03}}]\label{thm:fin 2-fin}
A skew-symmetrizable matrix $B$ is of finite type if and only if it is $2$-finite.
\end{theorem}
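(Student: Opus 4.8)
The plan is to establish the equivalence by proving the two implications separately. The direction finite type $\Rightarrow$ 2-finite I would handle by a short rank-two reduction, while the converse 2-finite $\Rightarrow$ finite type is the substantial part and I would split it into a combinatorial classification of 2-finite diagrams followed by a finiteness count for each Dynkin type.

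For finite type $\Rightarrow$ 2-finite I would argue by contraposition. Because having only finitely many $g$-vector tuples is a property of the whole mutation class, and because finite type is inherited by full subquivers (principal submatrices together with the mutations performed only at their indices), it suffices to produce a single rank-two obstruction. If $B$ is not 2-finite, then by definition there is a matrix $B'$ mutation-equivalent to $B$ and indices $i\neq j$ with $|b'_{ij}b'_{ji}|\ge 4$, so the principal submatrix $\left(\begin{smallmatrix}0 & b\\ -c & 0\end{smallmatrix}\right)$ has $bc\ge 4$. I would then treat this rank-two case by direct computation: applying the alternating sequence $\mu_1,\mu_2,\mu_1,\dots$ and tracking the resulting $g$-vectors, one finds that they are pairwise distinct and infinite in number when $bc\ge 4$, whereas for $bc\le 3$ the sequence is periodic. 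Hence the rank-two submatrix already has infinitely many $g$-vector tuples, so $B'$, and therefore $B$, is of infinite type.

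For 2-finite $\Rightarrow$ finite type I would first reduce to the connected case and pass from $B$ to its diagram, the weighted graph whose edge $\{i,j\}$ carries the weight $|b_{ij}b_{ji}|$ together with the sign pattern, so that matrix mutation becomes diagram mutation. The key step is the classification: a connected 2-finite diagram is mutation-equivalent to an orientation of a Dynkin diagram of one of the types $A_n, B_n, C_n, D_n, E_6, E_7, E_8, F_4, G_2$. I would prove this by identifying the minimal obstructions to 2-finiteness — the extended (affine) Dynkin diagrams together with a short explicit list of further small exceptional diagrams — and then inducting on the number of vertices, showing that a connected diagram containing none of these as a full subdiagram and remaining bounded under every mutation must already lie in a Dynkin mutation class. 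The hard part will be exactly this classification: one must verify that the obstruction list is complete and control how edge weights and orientations propagate under mutation, which demands a careful and lengthy case analysis and is the technical heart of the argument.

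Finally, I would establish finiteness for each Dynkin type. Here I would use the identification of cluster variables with the almost positive roots $\Phi_{\ge -1}=\Phi_{>0}\cup(-\Pi)$ of the finite root system $\Phi$ attached to the Dynkin diagram: the denominator vectors of the cluster variables are precisely the positive roots, and the clusters correspond to the maximal sets of pairwise compatible almost positive roots. Since $\Phi_{\ge -1}$ is a finite set, there are only finitely many cluster variables, hence only finitely many $g$-vector tuples, so $B$ is of finite type. Combining this with the first implication yields the stated equivalence.
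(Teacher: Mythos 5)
The paper does not prove this statement at all: it is imported verbatim from Fomin--Zelevinsky's finite type classification (\cite[Theorems~1.5 and~7.1]{FZ03}), so there is no internal proof to compare against. Your proposal is, in outline, a faithful reconstruction of the actual argument in that reference: the contrapositive rank-two reduction for finite type $\Rightarrow$ $2$-finite (restriction to a principal submatrix plus the explicit $\mu_1,\mu_2,\mu_1,\dots$ computation showing unbounded growth when $bc\ge 4$), and for the converse the mutation classification of $2$-finite diagrams into Dynkin types followed by the parametrization of cluster variables by almost positive roots. The one structural point worth making explicit in the first direction is the restriction lemma itself: you need that mutations of $B'$ at indices in $I$ induce exactly the mutations of $B'_I$ and that infinitely many $g$-vector tuples (or cluster variables) for $B'_I$ force infinitely many for $B'$; this is true and is a standard lemma in \cite{FZ03}, but it is an actual step, not a triviality, especially with the paper's definition of finite type via $g$-vector tuples rather than cluster variables.

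That said, you should be clear that what you have written is a roadmap, not a proof. The two ingredients you defer are precisely where all the work lies: (i) verifying that the list of minimal non-$2$-finite diagrams (affine diagrams plus the exceptional small cases) is complete and that every connected diagram avoiding them is mutation-equivalent to a Dynkin orientation, and (ii) establishing the bijection between cluster variables and almost positive roots, which in \cite{FZ03} requires the compatibility-degree machinery and its own lengthy verification. Neither admits a shortcut at the level of generality of this theorem, so as a standalone submission the proposal would be judged incomplete; as a summary of why the cited theorem holds, it is accurate.
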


Let $g=(\mathbf{g}_1,\ldots,\mathbf{g}_n)$ be a $g$-vector tuple for $B$. We define the \emph{$g$-cone} of $g$ by
\[
C(g):=\left\{\sum_{i=1}^{n}a_i\mathbf{g}_i\relmiddle| a_i\in\mathbb{R}_{\ge 0}\right\}.
\]
We denote by $\mathcal{F}(B)$ the set of all faces of the cones $C(g)$, where $g$ runs over all $g$-vector tuples for $B$. The set $\mathcal{F}(B)$ forms a simplicial polyhedral fan in $\mathbb{R}^n$ \cite[Theorem~0.8]{GHKK18}.

\begin{definition}
The fan $\mathcal{F}(B)$ is called the \emph{$g$-fan} of $B$.
\end{definition}

We denote by $|\mathcal{F}(B)|$ the support of the $g$-fan $\mathcal{F}(B)$, that is
\[
|\mathcal{F}(B)|:=\bigcup_{C \in \mathcal{F}(B)}C\subseteq\mathbb{R}^n.
\]

\begin{example}\label{exam:rank 2 g-fan}
Any nonzero $2 \times 2$ skew-symmetrizable matrix is given by
\[
B_{b,c}:=\begin{bmatrix}0 & c \\ -b & 0\end{bmatrix},
\]
where $b$ and $c$ are integers and $bc > 0$. The $g$-fan $\mathcal{F}(B_{b,c})$ is well-known (see e.g. \cite[Example~1.15]{GHKK18} or \cite[Section~2]{N24}). In fact, for $bc \ge 4$, $\mathcal{F}(B_{b,c})$ contains infinitely many rays converging to the rays $r_{\pm}$ of slope $(-bc \pm \sqrt{bc(bc-4)})/2c$ (see Figure~\ref{fig:rank 2 g-vector fans}). In this case, the ray of slope $-b/2$ is contained in $(\mathbb{R}^2\setminus|\mathcal{F}(B_{b,c})|)\cup\{0\}$. In particular, the lattice point $(-2,b)$ is contained in $\mathbb{Z}^n\setminus|\mathcal{F}(B_{b,c})|$.
\end{example}

\begin{figure}[htp]
\centering
\begin{tabular}{ccc}
\begin{tikzpicture}[baseline=-1mm,scale=1.5]
\coordinate(0)at(0,0); \coordinate(x)at(1,0); \coordinate(-x)at(-1,0); \coordinate(y)at(0,1); \coordinate(-y)at(0,-1);
\draw[->](-x)--(x); \draw[->](-y)--(y); \draw(-0.5,1)--(0)--(-1,1);
\end{tikzpicture}
&
\begin{tikzpicture}[baseline=-1mm,scale=1.5]
\coordinate(0)at(0,0); \coordinate(x)at(1,0); \coordinate(-x)at(-1,0); \coordinate(y)at(0,1); \coordinate(-y)at(0,-1);
\draw[->](-x)--(x); \draw[->](-y)--(y); \draw(-0.25,1)--(0)--(-1,1); \draw(-0.33,1)--(0)--(-0.75,1); \draw(-0.375,1)--(0)--(-0.66,1);
 \draw(-0.4,1)--(0)--(-0.6,1); \draw(-0.42,1)--(0)--(-0.57,1); \draw(-0.44,1)--(0)--(-0.55,1); \draw(-0.46,1)--(0)--(-0.53,1);
\draw[red](0)--(-0.5,1); \node at(-0.5,1.15){$r_+=r_-$};
\end{tikzpicture}
&
\begin{tikzpicture}[baseline=-1mm,scale=1.5]
\coordinate(0)at(0,0); \coordinate(x)at(1,0); \coordinate(-x)at(-1,0); \coordinate(y)at(0,1); \coordinate(-y)at(0,-1);
\draw[->](-x)--(x); \draw[->](-y)--(y); \draw(-0.2,1)--(0)--(-1,1); \draw(-0.25,1)--(0)--(-0.8,1);
\draw(-0.267,1)--(0)--(-0.75,1); \draw(-0.2727,1)--(0)--(-0.733,1);
\draw[red](-0.2764,1)--(0)--(-0.7236,1); \node at(-0.7236,1.15){$r_+$}; \node at(-0.2764,1.15){$r_-$};
\end{tikzpicture}\vspace{2mm}\\
$\mathcal{F}(B_{2,1})$ & $\mathcal{F}(B_{4,1})$ & $\mathcal{F}(B_{5,1})$
\end{tabular}
   \caption{Examples of the $g$-vector fans $\mathcal{F}(B_{b,c})$}
   \label{fig:rank 2 g-vector fans}
\end{figure}
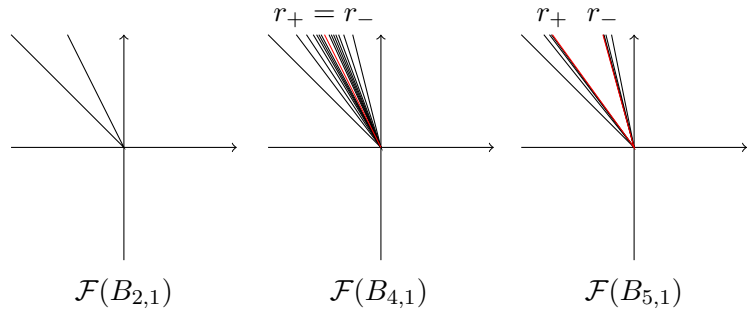
\begin{theorem}[{\cite[Theorem~10.6]{Re14}}]\label{thm:Reading}
If $B$ is of finite type, then $|\mathcal{F}(B)|=\mathbb{R}^n$.
\end{theorem}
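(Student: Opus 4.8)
The plan is to reduce the statement to a combinatorial-topological property of finite simplicial fans and then to read that property off from the mutation rule. Write $S:=|\mathcal{F}(B)|$. Since $B$ is of finite type there are only finitely many $g$-vector tuples, so $\mathcal{F}(B)$ has finitely many maximal cones $C(g)$, each an $n$-dimensional simplicial cone. In particular $S$ is a finite union of closed cones, hence closed, and it is nonempty because it contains the initial cone $C(\mathbf e_1,\ldots,\mathbf e_n)=\mathbb{R}^n_{\ge 0}$. The goal is to show that the topological boundary $\partial S$ is empty, from which $S=\mathbb{R}^n$ will follow by connectedness.

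\textbf{Local wall-crossing step.}
For a $g$-vector tuple $g=(\mathbf g_1,\ldots,\mathbf g_n)$ and $k\in\{1,\ldots,n\}$, the facets of the simplicial cone $C(g)$ are exactly the walls $W_k:=\mathrm{cone}(\mathbf g_i : i\neq k)$. The mutated tuple $\mu_k(g)$ keeps every $\mathbf g_i$ with $i\neq k$ and replaces $\mathbf g_k$ by $\mathbf g'_k$, so $C(\mu_k g)$ is again a maximal cone having $W_k$ as a facet. The essential structural input is that $\mathbf g_k$ and $\mathbf g'_k$ lie on strictly opposite sides of the hyperplane $H_k:=\mathrm{span}(W_k)$, so that $C(g)$ and $C(\mu_k g)$ sit on opposite sides of $H_k$ and their union contains a full neighborhood of every point of $\mathrm{relint}(W_k)$. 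I would isolate this opposite-sides property as the one genuinely nontrivial ingredient; it is the standard wall-crossing behaviour of the $g$-fan, resting on sign-coherence and on $\mathcal{F}(B)$ being an honest fan \cite[Theorem~0.8]{GHKK18}. Because $\mathcal{F}(B)$ is a fan, distinct maximal cones meet only in common faces, so no three $n$-cones can share a facet; together with the existence of $C(\mu_k g)$ this shows that every wall is a facet of exactly two maximal cones, lying on opposite sides of its spanning hyperplane.

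\textbf{Globalization.}
With the local step in hand, I would argue that $\partial S=\emptyset$ as follows. As $S$ is a finite union of $n$-dimensional cones, $\partial S\subseteq\bigcup_{g,k}W_k$, the union of all walls. For a point $p\in\mathrm{relint}(W_k)$ the opposite-sides property gives $C(g)\cup C(\mu_k g)\supseteq$ a neighborhood of $p$, so $p\in\mathrm{int}(S)$ and hence $p\notin\partial S$. Therefore $\partial S$ is contained in the union $Z$ of the cones of $\mathcal{F}(B)$ of codimension at least $2$, a closed set of topological dimension at most $n-2$ with $Z\subseteq S$. For $n\ge 2$ the complement $\mathbb{R}^n\setminus Z$ is connected, and in it the set $S\setminus Z$ is closed (since $S$ is closed) and open (since $\partial S\subseteq Z$, every point of $S\setminus Z$ lies in $\mathrm{int}(S)$) and nonempty. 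Connectedness then forces $S\setminus Z=\mathbb{R}^n\setminus Z$, and since $Z\subseteq S$ we conclude $S=\mathbb{R}^n$. The case $n=1$ is immediate, as the only $1$-dimensional fan satisfying the wall condition is $\{\mathbb{R}_{\ge 0},\mathbb{R}_{\le 0}\}$.

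\textbf{Main obstacle.}
The finiteness hypothesis enters decisively twice: it makes $S$ closed, and it confines $\partial S$ to finitely many walls. Both fail in the infinite case: for $B_{b,c}$ with $bc\ge 4$ the accumulation rays $r_\pm$ of Example~\ref{exam:rank 2 g-fan} lie in $\overline{S}\setminus S$ and in the relative interior of no wall, so the above reasoning has no foothold. Accordingly, the step I expect to demand the most care is not the topology but the opposite-sides wall-crossing claim, namely that $\mathbf g'_k$ lands strictly across $H_k$ from $\mathbf g_k$; this is where sign-coherence of the $c$-vectors does the real work, and once it is secured the remaining topological argument is routine.
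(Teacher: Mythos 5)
The paper offers no proof of this statement: it is quoted verbatim from \cite[Theorem~10.6]{Re14}, so there is no internal argument to compare yours against. Judged on its own, your proof sketch is correct and is essentially the standard ``finite fan without boundary'' (pseudomanifold) argument. The topological part is sound: finiteness makes $S=|\mathcal{F}(B)|$ closed; every boundary point of $S$ must lie in a facet of some maximal cone; relative-interior points of facets are interior to $S$ once each wall is shared by two maximal cones on opposite sides; hence $\partial S$ sits in the codimension-$\ge 2$ skeleton $Z$, whose complement is connected, and the clopen argument finishes. You correctly isolate the single nontrivial input --- that $C(g)$ and $C(\mu_k g)$ are distinct full-dimensional cones sharing the facet $\mathrm{cone}(\mathbf g_i: i\ne k)$ and lying on opposite sides of its span --- and this input is genuinely available from the material the paper already quotes: Theorem~\ref{thm:g-cone=chamber} together with the subsequent remark that a mutation of $g$-vector seeds corresponds to an \emph{adjacent} pair of chambers of $\mathfrak{D}(B)$, plus the fan property from \cite[Theorem~0.8]{GHKK18} forcing distinct maximal cones to meet only along the common facet. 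By contrast, the route implicit in the citation goes through Reading's mutation fan (complete by construction, shown to coincide with the $g$-fan in finite type), and another common route realizes the finite-type $g$-fan as the normal fan of a generalized associahedron; your argument is more elementary and self-contained given \cite{GHKK18}, at the cost of leaving the opposite-sides claim as a citation rather than a verification from the mutation formula and sign-coherence.
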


We recall the following transition rule, which was conjectured in \cite[Conjecture~7.12]{FZ07}.

\begin{theorem}[{\cite[Corollary~5.5]{GHKK18}\cite[Proposition~4.2]{NZ12}}]\label{thm:linear transformation}
For $k \in \{1,\ldots,n\}$, $\mathcal{F}(\mu_k B)$ is obtained from $\mathcal{F}(B)$ by the map $(g_i)_{1 \le i \le n} \mapsto (g_i')_{1 \le i \le n}$, where
\[
g_i'=\begin{cases}
-g_k & \mbox{if} \ \ i=k,\\
g_i + [b_{ik}]_+g_k - b_{ik}\min(g_k,0) & \mbox{otherwise}.
\end{cases}
\]
\end{theorem}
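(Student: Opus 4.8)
The plan is to realize the stated map as a piecewise-linear homeomorphism $T_k\colon\mathbb R^n\to\mathbb R^n$ and to show that it carries $\mathcal F(B)$ onto $\mathcal F(\mu_k B)$ cone by cone. Writing $x=(x_1,\dots,x_n)$ for the coordinates, the defining formula splits $T_k$ into two linear maps glued along $\{x_k=0\}$: on $\{x_k\ge 0\}$ it is $x_i\mapsto x_i+[b_{ik}]_+x_k$ for $i\ne k$ together with $x_k\mapsto -x_k$, and on $\{x_k\le 0\}$ it is $x_i\mapsto x_i+[-b_{ik}]_+x_k$ for $i\ne k$ together with $x_k\mapsto -x_k$. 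The two pieces agree on $\{x_k=0\}$, so $T_k$ is a well-defined PL homeomorphism with inverse of the same shape. Since $\mathcal F(B)$ and $\mathcal F(\mu_k B)$ are simplicial fans determined by their maximal cones $C(g)$, and $T_k$ is a homeomorphism, it suffices to produce a bijection $g\mapsto g'$ between $g$-vector tuples for $B$ and for $\mu_k B$ such that $T_k$ is linear on $C(g)$ and $C(g')=T_k(C(g))$; the full face lattices then match automatically.

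To construct the bijection I would use that $B$ and $\mu_k B$ generate the same cluster pattern with two distinguished initial vertices $t_0\leftrightarrow B$ and $t_1=\mu_k(t_0)\leftrightarrow\mu_k B$ joined by an edge labelled $k$. This identifies the seeds for $B$ and for $\mu_k B$ along the common mutation tree, and at corresponding seeds the exchange matrices (upper parts) coincide. The theorem then becomes a \emph{change-of-initial-seed} statement: for each seed $t$, the $g$-matrix $G_t$ computed from the root $t_0$ and the $g$-matrix $G_t'$ computed from the root $t_1$ satisfy $(G_t')_{\bullet i}=T_k((G_t)_{\bullet i})$ for every $i$. I would prove this by induction on the distance of $t$ from $t_0$. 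The base case is a direct calculation with the $g$-vector seed mutation rule and the matrix mutation $b_{ik}\mapsto -b_{ik}$: at $t_0$ one has $G_{t_0}=\mathrm{id}$, and mutating once at $k$ shows that $T_k$ sends the initial orthant to the $k$-th neighbouring $g$-cone of $\mathcal F(\mu_k B)$ and, symmetrically, sends the $k$-th neighbour of $t_0$ back to the initial orthant of $\mathcal F(\mu_k B)$. In the inductive step one applies the mutation rule at some index $\ell$ to both patterns at once; by the inductive hypothesis $(G_t')_{\bullet i}=T_k((G_t)_{\bullet i})$ and $C(G_t)$ lies in one closed half-space $\{x_k\ge 0\}$ or $\{x_k\le 0\}$, so $T_k$ restricts there to a single linear map $L$. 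Only the $\ell$-th column changes, and pushing $L$ through the positive combination $-g_\ell+\sum_i[b_{i\ell}]_+g_i$ reduces the claim to matching the correction terms $\sum_j[c_{j\ell}]_+\mathbf b_j$ that the two roots contribute.

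Matching those correction terms is where the real work lies, and it forces a parallel induction on the $c$-vectors of both patterns, together with the relation $T_k(\mathbf b_j)=\mathbf b_j^{\circ}$ between the initial columns of $B$ and of $\mu_k B$ on the relevant half-space. The decisive input is \textbf{sign-coherence} of $c$-vectors \cite{GHKK18}: each $c$-vector is entrywise nonnegative or nonpositive, which is exactly what determines on which side of the wall $\{x_k=0\}$ the newly produced ray $\hat g_\ell$ lies, and hence which linear branch of $T_k$ must be used (it also keeps the half-space hypothesis alive through the induction). The main obstacle is the inductive step when the mutation at $\ell$ \emph{crosses} the wall $\{x_k=0\}$: there the two branches of $T_k$ interact with the mutation recursion, and one must verify that the sign data supplied by sign-coherence makes the piecewise formula close up consistently. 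This sign bookkeeping is streamlined by the tropical duality of \cite{NZ12}, which expresses the $k$-th coordinates of the $g$-vectors directly through the sign-coherent $c$-vectors, and it is the technical heart of the argument. (Alternatively, once scattering diagrams are available one can argue conceptually: $\mathcal F(B)$ and $\mathcal F(\mu_k B)$ are the cluster chambers of the scattering diagrams $\mathfrak D_B$ and $\mathfrak D_{\mu_k B}$, which are identified by precisely the mutation map $T_k$, yielding \cite[Corollary~5.5]{GHKK18}.)
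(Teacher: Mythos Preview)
The paper does not give its own proof of this statement: it is quoted as a known result, with attribution to \cite[Corollary~5.5]{GHKK18} and \cite[Proposition~4.2]{NZ12}, and is used as a black box (only to derive Corollary~\ref{cor:preserve lattice pt}). So there is nothing in the paper to compare your argument against.

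That said, your outline is exactly the route taken in the cited references. Nakanishi--Zelevinsky establish the recursion on $g$-vectors under change of initial seed by precisely the induction you describe, conditional on sign-coherence of $c$-vectors; the unconditional statement then follows once sign-coherence is supplied by \cite{GHKK18}. Your identification of the delicate point --- the wall-crossing case where the new ray lands on the opposite side of $\{x_k=0\}$, so that the two linear branches of $T_k$ must be reconciled via the sign of the relevant $c$-vector --- is accurate, and the tropical duality you invoke is indeed how \cite{NZ12} organizes that bookkeeping. The alternative scattering-diagram argument you mention at the end is likewise the content of \cite[Corollary~5.5]{GHKK18}. In short: your proposal is a faithful sketch of the literature proofs, not a new approach, and since the paper itself merely cites those proofs there is no discrepancy to report.
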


This naturally provides the following result.

\begin{corollary}\label{cor:preserve lattice pt}
For $k\in\{1,\ldots,n\}$, $\mathbb{Z}^n\subset|\mathcal{F}(B)|$ if and only if $\mathbb{Z}^n\subset|\mathcal{F}(\mu_k(B))|$.
\end{corollary}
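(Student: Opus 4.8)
The plan is to read Theorem~\ref{thm:linear transformation} as a single piecewise-linear self-map of $\mathbb{R}^n$ and to verify that this map is a lattice-preserving bijection. Concretely, I would set $\phi_k\colon\mathbb{R}^n\to\mathbb{R}^n$, $g=(g_1,\dots,g_n)\mapsto g'=(g_1',\dots,g_n')$, where $g_k'=-g_k$ and $g_i'=g_i+[b_{ik}]_+g_k-b_{ik}\min(g_k,0)$ for $i\neq k$. The sign of the scalar $g_k$ splits $\mathbb{R}^n$ into the two closed half-spaces $\{g_k\ge0\}$ and $\{g_k\le0\}$. On the former $\min(g_k,0)=0$, so $\phi_k$ is the linear map $M^+$ given by $g_i'=g_i+[b_{ik}]_+g_k$ $(i\neq k)$; on the latter, using $[b_{ik}]_+-b_{ik}=[-b_{ik}]_+$, it is the linear map $M^-$ given by $g_i'=g_i+[-b_{ik}]_+g_k$ $(i\neq k)$. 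Both formulas restrict to the identity on the common boundary $\{g_k=0\}$, so $\phi_k$ is continuous and well defined.

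Next I would check that $\phi_k$ is a bijection of $\mathbb{R}^n$ with $\phi_k(\mathbb{Z}^n)=\mathbb{Z}^n$. Each of $M^+$ and $M^-$ is the identity matrix with its $k$-th column replaced by a vector whose $k$-th entry is $-1$; expanding the determinant along the $k$-th column gives $\det M^\pm=-1$, so $M^\pm\in\mathrm{GL}_n(\mathbb{Z})$. Since $g_k'=-g_k$, the linear isomorphism $M^+$ carries $\{g_k\ge0\}$ bijectively onto $\{g_k\le0\}$ and $M^-$ carries $\{g_k\le0\}$ bijectively onto $\{g_k\ge0\}$; as the two agree on the boundary hyperplane, $\phi_k$ is a homeomorphism of $\mathbb{R}^n$. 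Because $M^\pm$ and their inverses all lie in $\mathrm{GL}_n(\mathbb{Z})$, both $\phi_k$ and $\phi_k^{-1}$ map integer points to integer points, whence $\phi_k(\mathbb{Z}^n)=\mathbb{Z}^n$.

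Finally I would combine these facts. By Theorem~\ref{thm:linear transformation}, applying $\phi_k$ to the fan $\mathcal{F}(B)$ produces $\mathcal{F}(\mu_k(B))$, so at the level of supports $\phi_k(|\mathcal{F}(B)|)=|\mathcal{F}(\mu_k(B))|$. Since $\phi_k$ is a bijection, it preserves and reflects inclusions: $\mathbb{Z}^n\subset|\mathcal{F}(B)|$ holds if and only if $\phi_k(\mathbb{Z}^n)\subset\phi_k(|\mathcal{F}(B)|)$, which by $\phi_k(\mathbb{Z}^n)=\mathbb{Z}^n$ and the support identity is precisely $\mathbb{Z}^n\subset|\mathcal{F}(\mu_k(B))|$. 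This yields the desired equivalence.

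I expect the only genuine subtlety to be turning the tuple-level rule of Theorem~\ref{thm:linear transformation} into an honest self-map of $\mathbb{R}^n$ and confirming that the two linear pieces glue into a single global bijection; once the determinant computation $\det M^\pm=-1$ is in place, lattice preservation and hence the full statement follow at once.
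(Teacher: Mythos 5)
Your proof is correct and follows the same route as the paper, which simply states that the corollary ``follows directly from Theorem~\ref{thm:linear transformation}''; you have supplied exactly the implicit verification (the two linear pieces $M^{\pm}$ are unimodular, glue along $\{g_k=0\}$ into a piecewise-linear bijection of $\mathbb{R}^n$ preserving $\mathbb{Z}^n$, and carry $|\mathcal{F}(B)|$ onto $|\mathcal{F}(\mu_k(B))|$). Nothing further is needed.
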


\begin{proof}
The assertion follows directly from Theorem~\ref{thm:linear transformation}.
\end{proof}

%%%%%%%%%%
%%%%%%%%%%
%%%%%%%%%%
%%%%%%%%%%
%%%%%%%%%%
%%%%%%%%%%
%%%%%%%%%%
%%%%%%%%%%
%%%%%%%%%%
%%%%%%%%%%
\section{Pull back of scattering diagrams}\label{sec:}

Let $B$ be an $n\times n$ skew-symmetrizable matrix. In this section, we prove the implication $(3)\Rightarrow(1)$ of Theorem~\ref{thm:main} by showing that if $B$ has a rank $2$ submatrix of infinite type, then there exists a lattice point outside the support $|\mathcal{F}(B)|$ (Proposition~\ref{prop:rank 2 no lattice pt}).

To prove Proposition~\ref{prop:rank 2 no lattice pt}, we use a pull-back of scattering diagrams. We refer to \cite{B17,GHKK18,Mu16} for the details of scattering diagrams. Roughly speaking, a \emph{scattering diagram} is a set of \emph{walls}, where a wall is a cone of codimension one in $\mathbb{R}^n$ together with some function. For the union of walls in a scattering diagram $\mathfrak{D}$, a connected component of its complement is called a \emph{chamber of $\mathfrak{D}$}. One can construct a scattering diagram $\mathfrak{D}(B)$ associated with $B$ and it relates to the $g$-fan $\mathcal{F}(B)$. We only state their properties which we need in this paper.

\begin{theorem}[{\cite[Theorem~0.8]{GHKK18}}]\label{thm:g-cone=chamber}
For a $g$-vector tuple $g$ for $B$, the interior of the $g$-cone $C(g)$ is a chamber of $\mathfrak{D}(B)$.
\end{theorem}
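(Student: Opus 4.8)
The plan is to prove the statement by induction on the length of a mutation sequence that produces the $g$-vector tuple $g$ from the initial tuple $(\mathbf{e}_1,\ldots,\mathbf{e}_n)$, exploiting the dictionary between $g$-vector seed mutation and wall-crossing in $\mathfrak{D}(B)$. Recall that $\mathfrak{D}(B)$ is, by the Kontsevich--Soibelman and GHKK existence-and-uniqueness theorem, the essentially unique consistent scattering diagram whose incoming walls are supported on the coordinate hyperplanes $\mathbf{e}_i^{\perp}$ and carry the initial wall functions determined by $B$. Thus I must show two things: (i) the initial $g$-cone is a chamber of $\mathfrak{D}(B)$, and (ii) crossing the wall that separates a $g$-cone from a neighbour realizes exactly the corresponding $g$-vector seed mutation, so that the chamber on the far side is again a $g$-cone.

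For the base case, the initial tuple gives $C(g)=\mathbb{R}^n_{\ge 0}$. I would check that its interior meets no wall of $\mathfrak{D}(B)$: the incoming walls lie on the coordinate hyperplanes and so touch only the boundary of the orthant, while every outgoing wall produced by the scattering construction is supported away from the open positive orthant, by the standard positivity of cluster scattering diagrams. Hence $\mathrm{Int}(\mathbb{R}^n_{\ge 0})$ is a connected component of the complement of the support, i.e.\ a chamber.

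For the inductive step, suppose $\mathrm{Int}\,C(g)$ is a chamber and fix $k$. The facet of $C(g)$ spanned by $\{\mathbf{g}_i : i\neq k\}$ lies on the support of $\mathfrak{D}(B)$ and, near its relative interior, on a single wall with supporting hyperplane $\mathrm{span}(\mathbf{g}_i : i\neq k)$. I would compute the wall-crossing automorphism attached to this wall and verify that its action on the seed data reproduces precisely the $g$-vector seed mutation $\mu_k$ recalled above. This identifies the chamber on the opposite side of the wall with $\mathrm{Int}\,C(\mu_k(g))$; since every $g$-vector tuple for $B$ is obtained from $(\mathbf{e}_1,\ldots,\mathbf{e}_n)$ by a finite sequence of such mutations, induction then gives the claim for all $g$.

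The main obstacle is not this local wall-crossing computation but the global consistency of $\mathfrak{D}(B)$, which is what guarantees the \emph{absence of spurious walls}: one must ensure that no additional walls subdivide the open $g$-cones, so that each $g$-cone really is a single chamber rather than a union of several. Establishing this is the deep part of the GHKK construction: building $\mathfrak{D}(B)$ order by order, verifying that the path-ordered product around every codimension-two locus is trivial, and deducing that the chamber decomposition inside the cluster region coincides with the $g$-fan. An alternative way to package the consistency cleanly is to identify cluster monomials with theta functions and read the chamber structure off the broken-line support of those theta functions; in either approach the crux is the global consistency statement rather than the per-wall identification.
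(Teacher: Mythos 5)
The paper does not prove this statement at all: it is imported verbatim as \cite[Theorem~0.8]{GHKK18} and used as a black box, so there is no in-paper proof to compare against. Your outline does correctly mirror the strategy of the actual argument in \cite{GHKK18}: the interior of the positive orthant is a chamber because every wall of $\mathfrak{D}(B)$ is contained in $n^{\perp}$ for some $n=\sum_i a_i\mathbf{e}_i$ with all $a_i\ge 0$ and not all zero, hence misses the open orthant (note this is a statement about the \emph{supports} of walls, not about ``positivity of cluster scattering diagrams,'' which refers to the coefficients of the wall functions and is a different, harder theorem); and the inductive step is carried by the mutation invariance of the scattering diagram, i.e.\ that applying the piecewise-linear mutation map to $\mathfrak{D}(B)$ yields $\mathfrak{D}(\mu_k B)$, which is simultaneously what makes wall-crossing reproduce the $g$-vector recursion and what rules out extra walls subdividing the image of the positive chamber.

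As a proof, however, your text is a roadmap rather than an argument. The three load-bearing steps --- existence and uniqueness of the consistent diagram, the identification of the crossing of the wall supported on $\mathrm{span}(\mathbf{g}_i:i\ne k)$ with the seed mutation $\mu_k$, and above all the absence of spurious walls inside the open $g$-cones --- are each announced (``I would check,'' ``I would compute'') but not carried out, and you say yourself that the last of these is the deep part of the GHKK construction. Since that is precisely the content of the theorem being cited, what you have written is an accurate description of where the proof lives, not a proof; in effect you have reproduced the paper's own stance of deferring entirely to \cite{GHKK18}. If a self-contained argument is intended, the concrete gap to fill is the consistency/mutation-invariance statement; no amount of local wall-crossing computation substitutes for it.
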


Theorem~\ref{thm:g-cone=chamber} means that a mutation of $g$-vector seeds corresponds an adjacent pair of chambers $C$ and $C'$ of $\mathfrak{D}(B)$. We say that $C'$ is the \emph{mutation of $C$ at the wall $\overline{C} \cap \overline{C'}$}. Thus any $g$-cone in $\mathcal{F}(B)$ is the closure of a chamber $C$ of $\mathfrak{D}(B)$ obtained from $C_0^{B}$ by a finite sequence of mutations, where $C_0^{B}$ is the interior of the cone spanned by $\mathbf{e}_1,\ldots,\mathbf{e}_n$,

For a subset $I \subset \{1,\ldots,n\}$, we consider a projection $\pi_I : \mathbb{R}^n \rightarrow \mathbb{R}^{|I|}$ given by $(r_i)_{1 \le i \le n} \mapsto (r_i)_{i \in I}$. We denote by $B_I$ the principal submatrix of $B$ indexed by $I$. The theorem below follows from the pull-back construction of scattering diagrams given in \cite[Theorem~33]{Mu16} for the skew-symmetric case. As observed in \cite{CL20}, this construction can be naturally extended to the skew-symmetrizable case.

\begin{theorem}[{\cite{CL20,Mu16}}]\label{thm:pull-back}
Let $\pi_I^{\ast}\mathfrak{D}(B_I)$ be a scattering diagram consisting of the walls $\pi_I^{-1}(W)$ for all walls $W$ of $\mathfrak{D}(B_I)$. Then each chamber of $\mathfrak{D}(B)$ is contained in some chamber of $\pi_I^{\ast}\mathfrak{D}(B_I)$.
\end{theorem}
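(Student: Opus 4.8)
The plan is to prove the stronger statement that the support of $\pi_I^\ast\mathfrak D(B_I)$ is contained in the support of $\mathfrak D(B)$; the chamber containment then follows formally, because each chamber of $\mathfrak D(B)$ is by definition a connected component of the complement of the union of walls of $\mathfrak D(B)$, and such a connected set lies inside a single connected component of the complement of the (smaller) union of walls of $\pi_I^\ast\mathfrak D(B_I)$. To compare the two diagrams I would work in the standard Lie-theoretic model of a scattering diagram: a pro-nilpotent Lie algebra $\mathfrak g=\bigoplus_{0\neq n\in N^+}\mathfrak g_n$ graded by the positive cone $N^+$ of $N=\mathbb Z^n$, with bracket determined by the skew-symmetrizable form attached to $B$; walls are codimension-one cones normal to some $n\in N^+$ carrying an element of the associated pro-unipotent group $G=\exp(\mathfrak g)$, and consistency means that every path-ordered product around a codimension-two stratum is trivial. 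I will use twice the fact, from \cite{GHKK18}, that $\mathfrak D(B)$ is the unique consistent completion of its incoming walls.

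The single structural input from the hypothesis that $B_I$ is a \emph{principal} submatrix is that the form of $B$ restricts on the direct summand $N_I=\bigoplus_{i\in I}\mathbb Z e_i$ to the form of $B_I$. First I would record the consequences: $\mathfrak g_I:=\bigoplus_{0\neq n\in N_I^+}\mathfrak g_n$ is a Lie subalgebra of $\mathfrak g$ (the bracket of two $N_I$-graded pieces lands in an $N_I$-graded piece since $N_I^+$ is closed under addition), and its complement $\bigoplus_{n\in N^+\setminus N_I}\mathfrak g_n$ is an ideal (adding any $n'\notin N_I$ keeps the sum outside $N_I$). This yields two homomorphisms I would exploit: the inclusion $\mathfrak g_I\hookrightarrow\mathfrak g$, inducing $G_I\hookrightarrow G$, and the quotient $\varphi\colon\mathfrak g\twoheadrightarrow\mathfrak g_I$ by that ideal, concretely the specialization $z^{e_j}\mapsto 0$ for $j\notin I$.

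Using the inclusion, I would check that $\pi_I^\ast\mathfrak D(B_I)$ is itself a consistent scattering diagram for $B$: each wall of $\mathfrak D(B_I)$ is normal to some $n\in N_I$ and carries an element of $G_I$, and since $\pi_I$ is dual to $N_I\hookrightarrow N$ the preimage walls keep the same normals and wall elements. Every codimension-two stratum of $\pi_I^\ast\mathfrak D(B_I)$ is a preimage, and the corresponding path-ordered product is the image under $G_I\hookrightarrow G$ of the trivial one in $\mathfrak D(B_I)$, hence trivial; moreover the incoming wall $e_i^\perp\subseteq\mathbb R^n$ for $i\in I$ is exactly $\pi_I^{-1}$ of the corresponding incoming wall of $\mathfrak D(B_I)$. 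Applying $\varphi$ to $\mathfrak D(B)$ instead, every wall whose normal lies outside $N_I$ collapses (its wall element dies), while the walls normal to $N_I$ survive unchanged; the result is again consistent, and after projection by $\pi_I$ it has precisely the incoming walls of $\mathfrak D(B_I)$, so by uniqueness of consistent completions it coincides with $\mathfrak D(B_I)$. This is the content of the pull-back construction of \cite[Theorem~33]{Mu16}, and it identifies the walls of $\mathfrak D(B)$ surviving $\varphi$ as the ones projecting onto the walls of $\mathfrak D(B_I)$; tracking their supports gives $|\pi_I^\ast\mathfrak D(B_I)|\subseteq|\mathfrak D(B)|$, and hence the theorem.

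The main obstacle is exactly this last comparison of supports. The difficulty is that the walls of $\mathfrak D(B)$ normal to $N_I$ need not be cylindrical over $\pi_I$: they can be bent by scattering against walls whose normals involve the coordinates $j\notin I$, so one cannot simply assert that the cylinders $\pi_I^{-1}(W)$ appear verbatim in $\mathfrak D(B)$. The way around this is to run the scattering construction order by order in the $N^+$-grading and carry $\varphi$ along at each order, showing that the support produced in any $N_I$-graded degree already contains the relevant preimage wall of $\mathfrak D(B_I)$. This order-by-order bookkeeping, together with the verification that the symmetrizer $D$ enters only through the restricted form, is what makes the argument go through in the skew-symmetrizable generality, following \cite{CL20}.
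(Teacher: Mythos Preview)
The paper does not give its own proof of this theorem: it is stated with the citations \cite{CL20,Mu16} and the surrounding text simply notes that it ``follows from the pull-back construction of scattering diagrams given in \cite[Theorem~33]{Mu16} for the skew-symmetric case'' and that ``this construction can be naturally extended to the skew-symmetrizable case'' as in \cite{CL20}. So there is no in-paper argument to compare against; the theorem is treated as a black box imported from the literature.

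Your outline is essentially a recapitulation of the Muller construction and is the right shape: the decomposition $\mathfrak g=\mathfrak g_I\oplus(\text{ideal})$, the induced quotient $\varphi$, and the uniqueness of the consistent completion are exactly the ingredients used in \cite{Mu16}, and your remark that the symmetrizer enters only through the restricted bilinear form is what makes the extension to the skew-symmetrizable case in \cite{CL20} go through. You correctly identify the one genuine subtlety---that walls of $\mathfrak D(B)$ normal to $N_I$ need not themselves be cylinders over $\pi_I$, so the support containment $|\pi_I^\ast\mathfrak D(B_I)|\subseteq|\mathfrak D(B)|$ is not immediate from the statement that $\varphi$ kills the non-$N_I$ walls---and your proposed fix (track supports through the finite-order approximations of the scattering construction) is the one used in the references. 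As written, though, your last paragraph is a description of what remains to be done rather than a proof; if you want a self-contained argument you will need to actually carry out that order-by-order verification, or else cite \cite[Theorem~33]{Mu16} and \cite{CL20} as the paper does.
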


\begin{proposition}\label{prop:rank 2 no lattice pt}
If there is a subset $I \subset \{1,\ldots,n\}$ such that $B_I=B_{b,c}$ with $bc \ge 4$, then
\[
\mathbb{Z}^n\setminus|\mathcal{F}(B)|\neq\emptyset.
\]
\end{proposition}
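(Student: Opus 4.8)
The plan is to exploit the rank-$2$ infinite-type behavior recorded in Example~\ref{exam:rank 2 g-fan} and transport it to $\mathbb{R}^n$ via the pull-back construction of Theorem~\ref{thm:pull-back}. Without loss of generality, after relabeling the indices we may assume $I=\{i,j\}$ and that $B_I=B_{b,c}$ with $bc\ge 4$. Example~\ref{exam:rank 2 g-fan} then tells us that there is a lattice point $p_0\in\mathbb{Z}^{|I|}\setminus|\mathcal{F}(B_I)|$; concretely one may take $p_0=(-2,b)$ in the coordinates indexed by $I$. The goal is to lift $p_0$ to a lattice point of $\mathbb{Z}^n$ that avoids $|\mathcal{F}(B)|$.

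First I would translate the statement $p_0\notin|\mathcal{F}(B_I)|$ into the language of scattering diagrams. Since $|\mathcal{F}(B_I)|$ is the union of the closures of the chambers of $\mathfrak{D}(B_I)$ reachable from $C_0^{B_I}$ by mutations, the ray through $p_0$ (other than the origin) lies in no such chamber closure; by the rank-$2$ geometry it sits in the limiting ``gap'' between the two accumulation rays $r_\pm$. Next I would choose a lattice lift $p\in\mathbb{Z}^n$ with $\pi_I(p)=p_0$, for instance $p=(-2)\mathbf{e}_i+b\,\mathbf{e}_j$ (so $p_k=0$ for $k\notin I$), and argue that $p\notin|\mathcal{F}(B)|$. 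The mechanism is Theorem~\ref{thm:pull-back}: every chamber of $\mathfrak{D}(B)$ lies inside a single chamber of the pull-back $\pi_I^{\ast}\mathfrak{D}(B_I)$, whose walls are exactly the preimages $\pi_I^{-1}(W)$ of walls $W$ of $\mathfrak{D}(B_I)$. Hence the chambers of $\pi_I^{\ast}\mathfrak{D}(B_I)$ are precisely the sets $\pi_I^{-1}(U)$ for chambers $U$ of $\mathfrak{D}(B_I)$, and a point of $\mathbb{R}^n$ lies in a chamber of $\mathfrak{D}(B)$ only if its projection under $\pi_I$ lies in a chamber of $\mathfrak{D}(B_I)$.

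The key reduction is therefore: if $p\in|\mathcal{F}(B)|$ then $\pi_I(p)\in|\mathcal{F}(B_I)|$. To make this precise I would note that $|\mathcal{F}(B)|$ is the closure of the union of the $g$-chambers, so a point in $|\mathcal{F}(B)|$ is either in such a chamber or a limit of points in these chambers; in either case Theorem~\ref{thm:pull-back} forces $\pi_I(p)$ into the closure of the union of the chambers of $\mathfrak{D}(B_I)$, which is $|\mathcal{F}(B_I)|$. Since $\pi_I(p)=p_0\notin|\mathcal{F}(B_I)|$ by Example~\ref{exam:rank 2 g-fan}, we conclude $p\notin|\mathcal{F}(B)|$, giving $p\in\mathbb{Z}^n\setminus|\mathcal{F}(B)|$ as desired.

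The main obstacle I anticipate is the boundary/closure bookkeeping in the last step: Theorem~\ref{thm:pull-back} is phrased for open chambers, whereas $|\mathcal{F}(B)|$ and $|\mathcal{F}(B_I)|$ involve their closures, and the point $p_0$ lies in the delicate limiting region between the accumulation rays $r_\pm$ rather than in an open complementary region. I would need to verify carefully that $p_0$ is genuinely outside the \emph{closure} of $\bigcup U$ and not merely outside each open chamber, using that the non-reachable ray through $(-2,b)$ (slope $-b/2$) is separated from $|\mathcal{F}(B_{b,c})|$ by a positive gap for $bc\ge 4$, as indicated in Example~\ref{exam:rank 2 g-fan}. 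Once this separation is established, commuting $\pi_I$ with closures and invoking the pull-back inclusion completes the argument.
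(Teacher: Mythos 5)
Your overall strategy (project onto the rank-$2$ subdiagram via Theorem~\ref{thm:pull-back} and use the lattice point $(-2,b)$ from Example~\ref{exam:rank 2 g-fan}) is the same as the paper's, and the reduction ``if $p\in|\mathcal{F}(B)|$ then $\pi_I(p)\in|\mathcal{F}(B_I)|$'' is the right statement to aim for. But your justification of it has a genuine gap, and it sits exactly where you flagged it. First, $|\mathcal{F}(B_I)|$ is the \emph{union of the closures} of the $g$-cones, not the \emph{closure of the union} of the chambers; these two sets differ precisely along the accumulation rays $r_\pm$. When $bc=4$ the rays $r_+=r_-$ coincide with the ray of slope $-b/2$ through your point $p_0=(-2,b)$, the $g$-cones accumulate onto it from both sides, and the closure of the union of the chambers is all of $\mathbb{R}^2$. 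So there is no ``positive gap'' separating $p_0$ from $|\mathcal{F}(B_{b,c})|$ in this case, and your closure-based separation yields no contradiction: $p_0$ \emph{does} lie in the closure of $\bigcup U$; it merely lies in no single $\overline{U}$. Second, even for $bc\ge 5$, to conclude that the chamber $U$ of $\mathfrak{D}(B_I)$ containing $\pi_I(C)$ satisfies $\overline{U}\subseteq|\mathcal{F}(B_I)|$ you must rule out that $U$ is an unreachable chamber lying in the region between $r_+$ and $r_-$; nothing quoted in the paper tells you that every chamber of $\mathfrak{D}(B_{b,c})$ is the interior of a $g$-cone.

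The paper closes both holes with an induction on the mutation sequence that your write-up lacks: every $g$-cone of $\mathcal{F}(B)$ is the closure of a chamber obtained from $C_0^B$ by finitely many mutations, $\pi_I(C_0^B)=C_0^{B_I}$, and by Theorem~\ref{thm:pull-back} each single mutation moves the projected chamber of $\mathfrak{D}(B_I)$ either not at all or to an adjacent chamber. Since infinitely many chambers of $\mathfrak{D}(B_I)$ separate $C_0^{B_I}$ from $r_\pm$, a finite mutation sequence can only ever land in a $g$-cone of $\mathcal{F}(B_I)$, and each such cone individually meets the ray through $(-2,b)$ only at the origin. This yields $\pi_I(\overline{C})\cap\bigl(\text{ray through }(-2,b)\bigr)=\{0\}$ chamber by chamber --- i.e., for the union of closures, which is what $|\mathcal{F}(B)|$ actually is --- after which the proof finishes as you intended. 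Replace your closure/separation step with this adjacency induction.
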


\begin{proof}
Let $C$ be the interior of a $g$-cone in $\mathcal{F}(B)$, which is a chamber of $\mathfrak{D}(B)$ by Theorem~\ref{thm:g-cone=chamber}. Let $C'$ be a cone obtained from $C$ by a single mutation. By Theorem~\ref{thm:pull-back}, the image $\pi_I(C)$ is contained in some chamber of $\mathfrak{D}(B_I)$, and $\pi_I(C')$ is contained in the same chamber or in one of its adjacent chambers.

Since $\pi_I(C_0^B)=C_0^{B_I}$ and there are infinitely many chambers of $\mathfrak{D}(B_I)$ converging to $r_{\pm}$ as in Example~\ref{exam:rank 2 g-fan}, the support $|\mathcal{F}(B)|$ does not intersect the cone spanned by $\pi_I^{-1}(r_{\pm})$. Moreover, this cone contains a lattice point by Example~\ref{exam:rank 2 g-fan}. Therefore, $\mathbb{Z}^n\setminus|\mathcal{F}(B)|\neq\emptyset$.
\end{proof}

We are ready to prove Theorem~\ref{thm:main}.

\begin{proof}[Proof of Theorem~\ref{thm:main}]
The implication $(1)\Rightarrow(2)$ follows from Theorem~\ref{thm:Reading}, and $(2)\Rightarrow(3)$ clearly holds. Finally, we prove $(3)\Rightarrow(1)$. By Theorem~\ref{thm:fin 2-fin}, if $B$ is not of finite type, it is mutation equivalent to $B'$ such that $B'_I=B_{b,c}$ with $bc\ge 4$ for some subset $I \subset \{1,\ldots,n\}$. By Proposition~\ref{prop:rank 2 no lattice pt}, $\mathbb{Z}^n\setminus|\mathcal{F}(B')|\neq\emptyset$, which implies that $\mathbb{Z}^n\setminus|\mathcal{F}(B)|\neq\emptyset$ by Corollary~\ref{cor:preserve lattice pt}. This completes the proof.
\end{proof}

%%%%%%%%%%
%%%%%%%%%%
%%%%%%%%%%
%%%%%%%%%%
%%%%%%%%%%
\medskip\noindent{\bf Acknowledgements}.
The author would like to thank Mohamad Haerizadeh for helpful discussions. He was supported by JSPS KAKENHI Grant Numbers JP21K13761.

\bibliographystyle{alpha}
\bibliography{bib}

\begin{thebibliography}{GHKK18}

\bibitem[Asa21]{A21}
Sota Asai.
\newblock The wall-chamber structures of the real {G}rothendieck groups.
\newblock {\em Advances in Mathematics}, 381:107615, 2021.

\bibitem[Bri17]{B17}
Tom Bridgeland.
\newblock Scattering diagrams, hall algebras and stability conditions.
\newblock {\em Algebraic Geometry}, 4(5):523--561, 2017.

\bibitem[CL20]{CL20}
Peigen Cao and Fang Li.
\newblock The enough $g$-pairs property and denominator vectors of cluster
  algebras.
\newblock {\em Mathematische Annalen}, 377(3):1547--1572, 2020.

\bibitem[Dem17]{D17}
Laurent Demonet.
\newblock {\em Combinatorics of Mutations in Representation Theory}.
\newblock Habilitation thesis, Nagoya University, 2017.
\newblock
  \url{https://www.math.nagoya-u.ac.jp/~demonet/recherche/habilitation.pdf}.

\bibitem[FZ02]{FZ02}
Sergey Fomin and Andrei Zelevinsky.
\newblock Cluster algebras {I}: foundations.
\newblock {\em Journal of the American Mathematical Society}, 15(2):497--529,
  2002.

\bibitem[FZ03]{FZ03}
Sergey Fomin and Andrei Zelevinsky.
\newblock Cluster algebras: Notes for the {CDM}-03 conference.
\newblock {\em Current developments in mathematics}, 2003(1):1--34, 2003.

\bibitem[FZ07]{FZ07}
Sergey Fomin and Andrei Zelevinsky.
\newblock Cluster algebras {IV}: coefficients.
\newblock {\em Compositio Mathematica}, 143(1):112--164, 2007.

\bibitem[GHKK18]{GHKK18}
Mark Gross, Paul Hacking, Sean Keel, and Maxim Kontsevich.
\newblock Canonical bases for cluster algebras.
\newblock {\em Journal of the American Mathematical Society}, 31(2):497--608,
  2018.

\bibitem[HY25]{HY25}
Mohamad Haerizadeh and Toshiya Yurikusa.
\newblock Finite-dimensional jacobian algebras: Finiteness and tameness.
\newblock {\em arXiv preprint arXiv:2507.04570}, 2025.

\bibitem[Mul16]{Mu16}
Greg Muller.
\newblock The existence of a maximal green sequence is not invariant under
  quiver mutation.
\newblock {\em The Electronic Journal of Combinatorics}, 23(2):P2--47, 2016.

\bibitem[Nak24]{N24}
Tomoki Nakanishi.
\newblock Local and global patterns of rank 3 $g$-fans of totally-infinite
  type.
\newblock {\em arXiv preprint arXiv:2411.16283}, 2024.

\bibitem[NZ12]{NZ12}
Tomoki Nakanishi and Andrei Zelevinsky.
\newblock On tropical dualities in cluster algebras.
\newblock {\em Algebraic groups and quantum groups}, 565:217--226, 2012.

\bibitem[Rea14]{Re14}
Nathan Reading.
\newblock Universal geometric cluster algebras.
\newblock {\em Mathematische Zeitschrift}, 277(1-2):499--547, 2014.

\bibitem[Yur23]{Y23}
Toshiya Yurikusa.
\newblock {\em Acyclic cluster algebras with dense $g$-vector fans}, volume~88.
\newblock Mathematical Society of Japan, 2023.

\end{thebibliography}

\end{document}